 \newtheorem{thm}{Theorem}[section]
 \newtheorem{lem}[thm]{Lemma}
 \newtheorem{prop}[thm]{Proposition}
 \theoremstyle{definition}
 \newtheorem{defn}[thm]{Definition}
 \theoremstyle{remark}
 \newtheorem{rem}[thm]{Remark}
 \numberwithin{equation}{section}
 \definecolor{DarkNavy}{RGB}{0,200,255}  
\definecolor{VibrantMagenta}{RGB}{255,0,127}  
\definecolor{LightGray}{RGB}{211,211,211}  
\definecolor{LightGray2}{RGB}{221,221,221}  
\definecolor{newGreen}{RGB}{85, 214, 69}  
\definecolor{LightBlue}{RGB}{173,225,255} 
\definecolor{lPurple}{RGB}{245, 190, 215}
\definecolor{neonPurple}{RGB}{177, 0, 253}  
\definecolor{uuuuuu}{rgb}{0,0,0}
\begin{document}

\title[Cycling along Euler road]
 {Cycling along Euler road}

\author[Dylan Wyrzykowski]{Dylan Wyrzykowski}

\address{
College of Inter-Faculty Individual Studies in Mathematics and Natural Sciences \\
University of Warsaw\\
Stefana Banacha 2C\\
02-097 Warsaw\\
Poland
}

\email{dylanwyrzykowski@gmail.com}

\subjclass{Primary 51M05; Secondary 51M04, 51N20}

\keywords{Euler line, cyclic polygon, multidimensional polygon, homothety, Shinagawa coefficients, triangle center.}

\begin{abstract} We introduce the notion of $P_{\lambda}$ points, which canonically parametrize points on the Euler line. This allows us to show that the Euler line of any $d$-dimensional inscribed polygon in Euclidean space arises from the Euler lines of its sub-polygons, beginning from the Euler line of a point in the plane. Furthermore, we situate $P_{\lambda}$ points in the literature of modern triangle centers.
\end{abstract}

\maketitle

\section{Introduction}

For a given triangle, its centroid, orthocenter, circumcenter, and nine-point center are collinear on what is now called the Euler line. In this paper, we study the Euler line for multidimensional polygons\footnote{We use this term to indicate we do not concern ourselves with the structure of this set of points, simply its cyclic order. } inscribed in hyperspheres.

Beginning in the plane, modern literature \cite{Gomez} considers the following notion. Let $A_{1}A_{2}A_{3}$, with $A_i = (x_i, y_i)$, be a triangle on the unit circle. The orthocenter of this triangle is given by
\begin{equation}
    H = \left( \sum_{i=1}^3 x_i, \sum_{i=1}^{3} y_i \right).
\end{equation}
It can then be observed that for a cyclic quadrilateral $A_1A_2A_3A_4$ (lying on the unit circle), the orthocenters of each sub-triangle $A_1A_2A_3A_4 \setminus A_i$ lie on the circle given by
\begin{equation}
    \left( \sum_{i=1}^{4} x_i - x \right)^2 + \left( \sum_{i=1}^{4} y_i - y \right)^2 = 1.
\end{equation}
Consequently, Gómez \cite{Gomez} defines the point $(x_1+x_2+x_3+x_4, y_1+y_2+y_3+y_4)$ as the orthocenter of the quadrilateral $A_1A_2A_3A_4$. This structure readily generalizes to any cyclic $n$-gon by defining its orthocenter as the sum of the coordinates of its vertices. Specifically, the orthocenters of each sub-polygon $A_1A_2\ldots A_n \setminus A_i$ lie on a circle of radius $1$ centered at the orthocenter of $A_1A_2\ldots A_n$. The line passing through the circumcenter of $A_1A_2\ldots A_n$ and this newly defined orthocenter is then the Euler line of the cyclic polygon. It is straightforward to show that the centroid of this polygon, $(\frac{1}{n}\sum x_i, \frac{1}{n} \sum y_i )$, also lies on this line. Similar ideas have been proposed in \cite{Yaglom,Andrica}, with slight differences in definition from \cite{Collings}. 

Building on these ideas, we introduce the notion of $P_{\lambda}$ points, which generalizes the inductive construction described above to $d$-dimensional Euclidean space. We demonstrate that each "layer" of this construction is connected through a homothety, and that lower-dimensional results can be viewed as "shadows" of higher-dimensional structures. Finally, we show that $P_{\lambda}$ points have long been present in the literature of triangle center geometry, appearing in foundational works such as \cite{Kimberling}. Thus, we generalize results about these triangle centers given in \cite{Collings, Gomez,Yaglom,Andrica, Rabinowitz2025, Bradley,Mammana}. Our generalization of the Euler line differs from others in the literature \cite{Buba,Myakishev,Snapper,Tabachnikov}, primarily through our definition of the centers themselves. 

Throughout this text, we denote the position vector of a point by its corresponding lowercase letter. We let $\mathbb{S}^{d-1}(Q,r) \subset \mathbb{R}^d$ be a $(d-1)$-dimensional hypersphere with center $Q$ and radius $r \geq 0$. The notation $\Phi_{X}^{\mu}$ will denote a homothety centered at a point $X$ with ratio $\mu \in \mathbb{R}$.

\section{$P_{\lambda}$ points and the Euler line.}

Let $\sigma = A_1A_2\ldots A_n$ be a multidimensional $n$-polygon in $\mathbb{R}^d$ inscribed in the unit hypersphere centered at the origin, $O$. Its circumcenter is therefore the origin, $O = 0 \cdot \sum_{i} a_i$, and its centroid is given by $G = \frac{1}{n} \sum_{i} a_i$. With this in mind, we propose to define the following object that bundles both of these into one.

\begin{defn}[$P_{\lambda}$ points] Let $A_1, A_2, \ldots, A_n \in \mathbb{S}^{d-1}(O,1)$, with $n \in \mathbb{Z}^+, d > 1$, then the point $P_{\lambda}$ such that \begin{equation}
    p_{\lambda} = \lambda \sum_{j=1}^{n} a_j, \quad \lambda \in \mathbb{R},
\end{equation}
    is called the $P_{\lambda}$ point of $A_1A_2\ldots A_n$. 
\end{defn}

Under this definition, specific values of $\lambda$ correspond to the classical centers of $A_1A_2\ldots A_n$: $P_{\frac{1}{n}} = G$ is the centroid, $P_{0}=O$ is the circumcenter, and $P_{1}=H$ is the quasi-orthocenter or orthocenter\footnote{If $\sigma$ is not orthocentric, one can define the notion of a "Monge Point" (see \cite{Buba}) in the simplex. This point coincides with $P_1$ when $\sigma$ is orthocentric; the extension of this notion beyond the simplex has not been studied, hence we call $P_1$ the quasi-orthocenter.} if $\sigma$ is orthocentric, or the generalized nine-point center $P_{\frac{1}{2}} = E$. We therefore define the set $ \ell = \{ P_{\lambda} : \lambda \in \mathbb{R} \}$ as the Euler line of $\sigma$. This definition is further motivated by the following theorem, which describes the behavior of the $P_{\lambda}$ family of points and reveals how these Euler lines are constructed inductively (Fig. 1).

\begin{figure}[ht]
    \begin{center}
        \begin{tikzpicture}\label{mainpi}
            \draw [line width = 0.5pt] (0,0) circle (4cm);
            \draw [line width = 0.5pt] (-0.44638501553674725,3.9750145179488663)-- (-3.954536412057831,0.6013665818024623);
            \draw [line width = 0.5pt] (-3.954536412057831,0.6013665818024623)-- (-1.391670223971025,-3.750100530347478);
            \draw [line width = 0.5pt] (-1.391670223971025,-3.750100530347478)-- (3.1433662974655063,-2.4737114463813263);
            \draw [line width = 0.5pt] (3.1433662974655063,-2.4737114463813263)-- (3.5755653496631368,1.7931348053864578);
            \draw [line width = 0.5pt] (3.5755653496631368,1.7931348053864578)-- (-0.44638501553674725,3.9750145179488663);
            \draw [line width = 0.5pt, color=black, fill = newGreen, opacity = 0.5] (1.6269588025402908,-0.15188755113116004) circle (1.333333333333333cm);
            \draw [line width = 0.5pt, color=black, fill = red, opacity = 0.5] (-0.8830751180333657,-0.5491436256591585) circle (1.3333333333333335cm);
            \draw [line width = 0.5pt, color=black, fill = orange, opacity = 0.5] (0.4575750036999291,-1.2764368631799612) circle (1.333333333333333cm);
            \draw [line width = 0.5pt, color=black, fill = DarkNavy, opacity = 0.5] (-0.739008767300822,0.873138458263436) circle (1.333333333333333cm);
            \draw [line width = 0.5pt, color=black, fill = yellow, opacity = 0.5] (0.7726700731780217,1.2986014862521533) circle (1.333333333333333cm);
            \draw [line width = 0.5pt,domain=-5.086692292917039:4.80212263964246] plot(\x,{(-0--0.04856797613632723*\x)/0.30877999852101345});
            \draw [line width = 1pt, color=black] (0.30877999852101307,0.04856797613632715) circle (1.333333333333333cm);

            % Add intersection lines
        \draw [line width = 0.5pt, dashed] (-0.44638501553674725,3.9750145179488663) -- (0.4575750036999291,-1.2764368631799612);
        \draw [line width = 0.5pt, dashed] (3.5755653496631368,1.7931348053864578) -- (-0.8830751180333657,-0.5491436256591585);
        \draw [line width = 0.5pt, dashed] (3.1433662974655063,-2.4737114463813263) -- (-0.739008767300822,0.873138458263436);
        \draw [line width = 0.5pt, dashed] (-1.391670223971025,-3.750100530347478) -- (0.7726700731780217,1.2986014862521533);
        \draw [line width = 0.5pt, dashed] (-3.954536412057831,0.6013665818024623) -- (1.6269588025402908,-0.15188755113116004);
        
        % Add point T at intersection
        \draw [fill=blue] (0.23,0.03) circle (1pt);
        \draw[color=blue, yshift = 15] (0.15,-0.2) node {$T$};
            
            \draw [fill=black] (-0.44638501553674725,3.9750145179488663) circle (1pt);
            \draw[color=black] (-0.31770999055997384,4.273571087675104) node {$A_1$};
            \draw [fill=black] (3.5755653496631368,1.7931348053864578) circle (1pt);
            \draw[color=black, right] (3.7040252933647424,2.0845253002224067) node {$A_2$};
            \draw [fill=black] (3.1433662974655063,-2.4737114463813263) circle (1pt);
            \draw[color=black, right] (3.2458529192467367,-2.4971984409576575) node {$A_3$};
            \draw [fill=black] (-1.391670223971025,-3.750100530347478) circle (1pt);
            \draw[color=black, below] (-1.6540460817374902,-3.7953535009586754) node {$A_4$};
            \draw [fill=black] (-3.954536412057831,0.6013665818024623) circle (1pt);
            \draw[color=black, left] (-4.135813108210021,1.0027294168882246) node {$A_5$};
            
            \draw [fill=black] (0,0) circle (1pt);
            \draw[color=black, left, xshift = -5] (0.09591784718544788,0.22638178296604716) node {$O$};
            
            \draw [fill=neonPurple] (0.7726700731780217,1.2986014862521533) circle (1pt);
\draw[color=neonPurple] (0.9040830070880411,1.5627178741435657) node {$P_{\lambda, 4}^{4}$};
\draw [fill=neonPurple] (1.6269588025402908,-0.15188755113116004) circle (1pt);
\draw[color=neonPurple, right] (1.7567927033632185,-0.02815842487728973) node {$P_{\lambda, 5}^{4}$};
\draw [fill=neonPurple] (0.4575750036999291,-1.2764368631799612) circle (1pt);
\draw[color=neonPurple,below] (0.4459106329700355,-1.3390404952704749) node {$P_{\lambda, 1}^{4}$};
\draw [fill=neonPurple] (-0.8830751180333657,-0.5491436256591585) circle (1pt);
\draw[color=neonPurple, left, yshift = -4] (-1.1067846348743169,-0.4990578093874629) node {$P_{\lambda, 2}^{4}$};
\draw [fill=neonPurple] (-0.739008767300822,0.873138458263436) circle (1pt);
\draw[color=neonPurple] (-0.9286064893839815,1.1809075623785603) node {$P_{\lambda, 3}^{4}$};
\draw [fill=neonPurple] (0.30877999852101345,0.04856797613632723) circle (1pt);
\draw[color=neonPurple, right, yshift = 4] (0.40136609659745165,0.2772898245347145) node {$P_{\lambda}^5$};
            
            \draw [line width = 0.5pt, color=neonPurple](0.7726700731780217,1.2986014862521533) -- (1.6269588025402908,-0.15188755113116004) -- (0.4575750036999291,-1.2764368631799612) -- (-0.8830751180333657,-0.5491436256591585)-- (-0.739008767300822,0.873138458263436) -- cycle;
        \end{tikzpicture}
    \end{center}
    \caption{Theorem \ref{themG} for $d=2, n=5$.}
\end{figure}
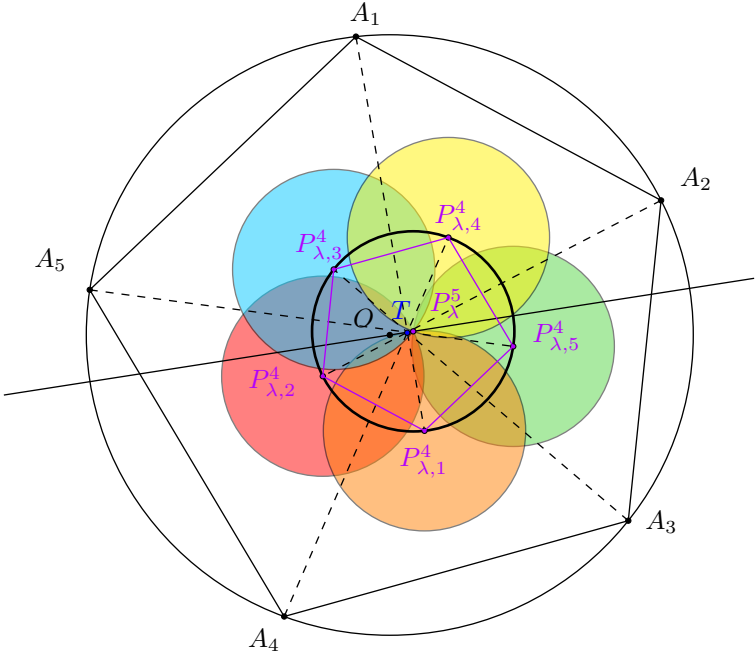

\begin{thm}\label{themG}
    Let $A_1, A_2, \ldots, A_n \in \mathbb{S}^{d-1}(O,1)$, with $n > 2, d > 1$. For each $i \in \{1,2,\ldots,n\}$, let $P_{\lambda, i}^{n-1}$ denote the $P_{\lambda}$ point of the sub-polygon $A_1A_2\ldots A_n \setminus A_i$, and let $P_{\lambda}^n$ be the $P_{\lambda}$ point of $A_1A_2\ldots A_n$. Then, the following holds:
    \begin{enumerate}
        \item the multidimensional polygon $P_{\lambda,1}^{n-1}P_{\lambda,2}^{n-1}\ldots P_{\lambda,n}^{n-1}$ is inscribed in the hypersphere $\mathbb{S}^{d-1}(P_{\lambda}^n, |\lambda|)$,
        \item each hypersphere $\mathbb{S}^{d-1}(P_{\lambda, i}^{n-1},|\lambda|)$ passes through $P_{\lambda}^{n}$,
        \item there exists a homothety $\Phi_{\mathbf{T}}^{-\lambda}$ that maps the multidimensional polygon $A_1A_2\ldots A_n$ to the multidimensional polygon $P_{\lambda,1}^{n-1}P_{\lambda,2}^{n-1}\ldots P_{\lambda,n}^{n-1}$,
        \item the lines $A_iP_{\lambda,i}^{n-1}$ concur at a point on the Euler line of $A_1A_2\ldots A_n$.
    \end{enumerate}
\end{thm}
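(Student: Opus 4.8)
The plan is to reduce everything to the single abbreviation $s = \sum_{j=1}^{n} a_j$, so that $p_{\lambda}^{n} = \lambda s$ and, for each $i$, the $P_{\lambda}$ point of the sub-polygon missing $A_i$ becomes
\[
    p_{\lambda,i}^{n-1} = \lambda \sum_{j \neq i} a_j = \lambda(s - a_i) = p_{\lambda}^{n} - \lambda a_i .
\]
This one identity is the engine behind all four parts. First I would dispatch (1) and (2) at once, since they assert the same distance viewed from two sides. From the displayed identity $p_{\lambda,i}^{n-1} - p_{\lambda}^{n} = -\lambda a_i$, and since each $A_i$ lies on the unit hypersphere so that $|a_i| = 1$, we get $\lvert p_{\lambda,i}^{n-1} - p_{\lambda}^{n} \rvert = |\lambda|$ for every $i$. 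Read one way this says all vertices $P_{\lambda,i}^{n-1}$ lie on $\mathbb{S}^{d-1}(P_{\lambda}^{n},|\lambda|)$, which is (1); read the other way it says $P_{\lambda}^{n}$ lies on each $\mathbb{S}^{d-1}(P_{\lambda,i}^{n-1},|\lambda|)$, which is (2).

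For (3) I would hunt for the center $T$ of the claimed homothety by solving $\Phi_{\mathbf{T}}^{-\lambda}(A_i) = P_{\lambda,i}^{n-1}$ directly. Writing the homothety as $x \mapsto t - \lambda(x - t) = (1+\lambda)t - \lambda x$ and matching it against $p_{\lambda,i}^{n-1} = \lambda s - \lambda a_i$, the $a_i$-terms cancel automatically and the condition collapses to $(1+\lambda)\,t = \lambda s$, which no longer involves $i$. Hence a single center
\[
    t = \frac{\lambda}{1+\lambda}\, s
\]
serves every index simultaneously, and it is precisely this independence from $i$ that turns $n$ separate maps into one homothety.

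Finally, (4) should fall out of (3) almost for free, which is the pleasant part of the argument. A homothety centered at $T$ carries each $A_i$ to its image $P_{\lambda,i}^{n-1}$ along the line joining them through the center, so $T$ lies on every line $A_i P_{\lambda,i}^{n-1}$, forcing these $n$ lines to concur at $T$. To place $T$ on the Euler line I would merely observe that $t = \tfrac{\lambda}{1+\lambda}\,s$ has the form $\mu s$ with $\mu = \tfrac{\lambda}{1+\lambda}$, i.e. $T = P_{\mu}^{n} \in \ell$. The conceptual heart is thus recognizing that the concurrency point of (4) \emph{is} the homothety center of (3), and this requires no computation at all.

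I expect the only genuine friction to be bookkeeping around degenerate parameter values rather than any substantive obstacle: at $\lambda = 0$ the ratio is $0$ and the homothety collapses the whole configuration to the center (all $P_{\lambda,i}^{n-1}$ coincide with $O$, through which the lines $A_iO$ trivially pass), while $\lambda = -1$ must be excluded from the formula for $t$, since there the map degenerates to a translation and the statement has to be read accordingly. Away from these values the algebra above is completely routine.
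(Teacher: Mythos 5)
Your proposal is correct and follows essentially the same route as the paper: the identity $p_{\lambda,i}^{n-1}=p_{\lambda}^{n}-\lambda a_i$ for parts (1)--(2), the homothety with center $t=\frac{\lambda}{1+\lambda}\sum_j a_j$ for part (3), and the observation that this center is $P_{\frac{\lambda}{\lambda+1}}$ for part (4). Your explicit handling of the degenerate values $\lambda=0$ and $\lambda=-1$ is a small point of care that the paper's proof omits.
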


\begin{proof}
    For point 1, we compute the distance between $P_{\lambda}^n$ and each $P_{\lambda,i}^{n-1}$:
\begin{equation}
    \begin{split}
        |p_{\lambda}^n - p_{\lambda,i}^{n-1}| &= \left| \lambda \sum_{j=1}^{n} a_j - \left( \lambda \sum_{j=1}^{n} a_j - \lambda a_i \right)\right| \\
        &= |\lambda a_i| = |\lambda|,
    \end{split}
\end{equation}
    since $A_i \in \mathbb{S}^{d-1}(O,1)$. This proves that all points $P_{\lambda,i}^{n-1}$ lie on the hypersphere $\mathbb{S}^{d-1}(P_{\lambda}^n, |\lambda|)$. For point 2, by symmetry, $P_{\lambda}^n$ lies at distance $|\lambda|$ from each $P_{\lambda,i}^{n-1}$, placing it on each hypersphere $\mathbb{S}^{d-1}(P_{\lambda,i}^{n-1}, |\lambda|)$. For point 3, consider the homothety $\Phi_{\mathbf{T}}^{-\lambda} : \mathbb{R}^d \to \mathbb{R}^d$ defined as follows:
    \begin{equation}\label{homodef}
        \Phi_{\mathbf{T}}^{-\lambda}: x \mapsto -\lambda(x - \mathbf{t}) + \mathbf{t}, \quad \text{where} \quad \mathbf{t} = \frac{\lambda}{\lambda + 1} \sum_{j=1}^{n} a_j.
    \end{equation}
    
    Thus, for any $i \in \{1,2,\ldots,n\}$ we have: \begin{equation}
        \begin{split}
              \Phi^{-\lambda}_{\mathbf{T}}(a_i) &= -\lambda\left(a_i - \frac{\lambda}{\lambda + 1}\sum_{j=1}^{n}a_j\right) + \frac{\lambda}{\lambda + 1}\sum_{j=1}^{n}a_j \\
        &= -\lambda a_i + \frac{\lambda^2}{\lambda + 1}\sum_{j=1}^{n}a_j + \frac{\lambda}{\lambda + 1}\sum_{j=1}^{n}a_j \\
        &= -\lambda a_i + \lambda\left(\frac{\lambda + 1}{\lambda + 1}\sum_{j=1}^{n}a_j\right) \\
        &= \lambda\sum_{j=1}^{n}a_j - \lambda a_i = p_{\lambda,i}^{n-1}.
        \end{split}
    \end{equation}
     Proving point 3. For point 4, since $\Phi_{\mathbf{T}}^{-\lambda}$ maps $A_i$ to $P_{\lambda,i}^{n-1}$, the lines $A_iP_{\lambda,i}^{n-1}$ must all pass through the center of homothety $\mathbf{T}$, which lies on the Euler line of $A_1A_2\ldots A_n$ since $\mathbf{t}= P_{\frac{\lambda}{\lambda +1}}$, by definition \eqref{homodef}.
\end{proof}

\begin{rem}
    We note that this framework implicitly defines the Euler line of a single point $A$ as the set $\{ \lambda a : \lambda \in \mathbb{R} \}$. In this sense, the Euler line of a point can be said to \textit{sprout}, generating the Euler line of any inscribed multidimensional polygon\footnote{A Geogebra animation presenting this idea for the planar case can be found at: \url{https://www.geogebra.org/m/jtzew4dz}. One interacts with the animation by clicking the $n$-slider and going forward or backward.}.
\end{rem}

\begin{figure}[ht]
    \begin{center}

\begin{tikzpicture}\label{niner}
    [line cap=round,line join=round,>=triangle 45,x=1cm,y=1cm]
\clip(-5.229893417463077,-3.027386489087619) rectangle (5.996951647346085,4.892408885515603);
\draw [line width = 0.5pt] (0,0) circle (3cm);
\draw [line width = 0.5pt] (-1.8493882694807926,2.362152202697117)-- (-2.830657392238432,-0.9936693251610013);
\draw [line width = 0.5pt] (-2.830657392238432,-0.9936693251610013)-- (2.9768080563973762,-0.3723087366255009);
\draw [line width = 0.5pt] (2.9768080563973762,-0.3723087366255009)-- (1.0119275667902548,2.824181757530835);
\draw [line width = 0.5pt] (1.0119275667902548,2.824181757530835)-- (-1.8493882694807926,2.362152202697117);
\draw [line width = 0.5pt] (-1.8493882694807926,2.362152202697117)-- (2.9768080563973762,-0.3723087366255009);
\draw [line width = 0.5pt] (-2.830657392238432,-0.9936693251610013)-- (1.0119275667902548,2.824181757530835);
\draw [line width = 0.5pt, fill = orange, opacity = 0.5] (1.0696736768534196,2.4070126118012256) circle (1.5cm);
\draw [line width = 0.5pt, fill = newGreen, opacity = 0.5] (0.5790391154745993,0.7291018478721661) circle (1.5cm);
\draw [line width = 0.5pt, fill = red, opacity = 0.5] (-0.8516188026609242,0.4980870704553072) circle (1.5cm);
\draw [line width = 0.5pt, fill = DarkNavy, opacity = 0.5] (-1.834059047464485,2.096332317533475) circle (1.5cm);
\draw [fill=black] (-1.8493882694807926,2.362152202697117) circle(1pt);
\draw[color=black] (-2.0512728414398924,2.671655452772132) node {$A_1$};
\draw [fill=black] (-2.830657392238432,-0.9936693251610013) circle(1pt);
\draw[color=black] (-2.993086345446762,-1.127705841801035) node {$A_2$};
\draw [fill=black] (2.9768080563973762,-0.3723087366255009) circle(1pt);
\draw[color=black] (3.192915987689268,-0.5176675494329491) node {$A_3$};
\draw [fill=black] (1.0119275667902548,2.824181757530835) circle(1pt);
\draw[color=black] (1.095240456037604,3.056942795320397) node {$A_4$};
\draw [fill=uuuuuu] (0.07307533207947214,-0.6829890308932511) circle (1pt);
\draw[color=uuuuuu, yshift = -5] (0.12131967348504552,-0.8066330563441478) node {$M_{23}$};
\draw [fill=uuuuuu] (-2.3400228308596125,0.6842414387680578) circle (1pt);
\draw[color=uuuuuu, yshift =-4] (-2.7576329694450443,0.8308381494859778) node {$M_{12}$};
\draw [fill=uuuuuu] (0.5637098934582918,0.994921733035808) circle (1pt);
\draw[color=uuuuuu, xshift = 3] (0.7099531134893391,1.237530344398035) node {$M_{13}$};
\draw [fill=uuuuuu] (-0.4187303513452689,2.593166980113976) circle (1pt);
\draw[color=uuuuuu, yshift = -3] (-0.08202642397098314,2.939216107319538) node {$M_{14}$};
\draw [fill=uuuuuu] (-0.9093649127240886,0.9152562161849167) circle (1pt);
\draw[color=uuuuuu, xshift = -17, yshift =6] (-0.6278501592476917,0.884350280395459) node {$M_{24}$};
\draw [fill=uuuuuu] (1.9943678115938155,1.225936510452667) circle (1pt);
\draw[color=uuuuuu, yshift = -6, xshift = 5] (2.2404000575005023,1.3552570323988937) node {$M_{34}$};
\draw [fill=neonPurple] (-0.3456550192657968,1.9101779492207251) circle (1pt);
\draw[color=neonPurple] (-0.23186039051753055,2.179343848404905) node {$E=P_{\frac{1}{2}}^4$};
\draw [fill=neonPurple] (-0.8516188026609242,0.4980870704553072) circle (1pt);
\draw[color=neonPurple, yshift = -10] (-0.7134695687028617,0.3706338236644392) node {$P_{\frac{1}{2},4}^{3}$};
\draw [fill=neonPurple] (0.5790391154745993,0.7291018478721661) circle (1pt);
\draw[color=neonPurple] (0.3246657709410742,0.6167896258480529) node {$P_{\frac{1}{2},1}^{3}$};
\draw [fill=neonPurple] (1.0696736768534196,2.4070126118012256) circle (1pt);
\draw[color=neonPurple, yshift = -13] (1.1594550131289814,2.6181433218626506) node {$P_{\frac{1}{2},2}^{3}$};
\draw [fill=neonPurple] (-1.834059047464485,2.096332317533475) circle (1pt);
\draw[color=neonPurple, yshift = -12] (-1.6552830727097312,2.1151292913135276) node {$P_{\frac{1}{2},3}^{3}$};
\end{tikzpicture}
    \end{center}
    \caption{The Euler point of a cyclic quadrilateral is a $P_{\lambda}$ point.}
\end{figure}
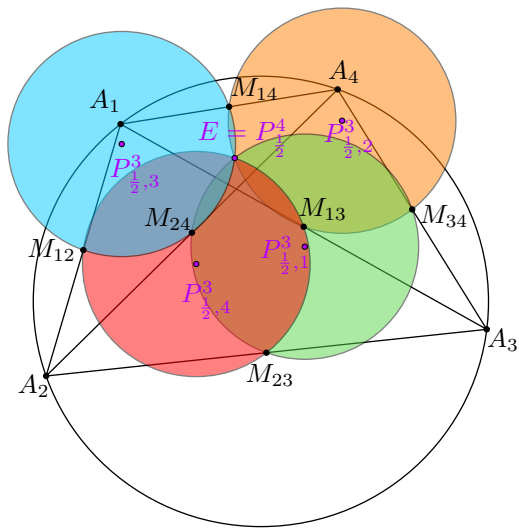

Theorem \ref{themG} thus generalizes many results in the plane, such as those discussed by Gómez \cite{Gomez} and Collings \cite{Collings}, as well as more classical results in \cite{Yaglom,Andrica}. More specifically, the well-known Euler point of a cyclic quadrilateral\footnote{Also known as the Poncelet point, quasi-orthocenter, or anticenter of a cyclic quadrilateral; for a given cyclic quadrilateral $ABCD$, the nine-point circles of $\triangle ABC, \triangle BCD, \triangle ACD, \triangle ABD$ all pass through this point.} (see Fig. 2) is a direct corollary of Theorem \ref{themG} for the case $(\lambda, n, d) = (\frac{1}{2}, 4, 2)$, since the nine-point center of a triangle $\triangle ABC$ is given by $\frac{1}{2}(a+b+c)$; or analogs of this Theorem for the centroid or orthocenter \cite{Bradley,Mammana}. Similarly, if we let $H_i$ denote the orthocenter of triangle $A_1A_2A_3A_4 \setminus A_i$ for cyclic quadrilateral $A_1A_2A_3A_4$ then lines $A_iH_i$ intersect at the Euler point of $A_1A_2A_3A_4$; this comes about as a corollary of Theorem \ref{themG} for $(\lambda, n, d) = (1, 4, 2)$. 

Importantly, the statement of Theorem \ref{themG} does not require the points $A_1, A_2, \ldots, A_n$ to be distinct or to form a non-degenerate structure; it holds for any set of points on a sphere. This observation implies that one can simply choose the $n$ points to lie on the $(d-2)$-dimensional unit-sphere. In this way, the Euler line in a lower dimension naturally arises as the "shadow" of a higher-dimensional structure, as for example, our Theorem for a tetrahedron degenerated into a cyclic quadrilateral gives rise to the above stated classical results.

\section{In the world of triangle centers}

The Euler line, as defined for a triangle, not only contains the four classical triangle centers -- the orthocenter, centroid, circumcenter, and nine-point center -- but, in the context of the Encyclopedia of Triangle Centers, it passes through $6683$ \cite{Kimberling} (as of May $4$th, 2024). We let $X_k$ denote the $k$th cataloged center in the Encyclopedia of Triangle Centers (henceforth, ETC).

We now identify which triangle centers \( X_k \) are \( P_{\lambda} \) points. For this, we use the Shinagawa coefficients, which are defined in the ETC \cite{Kimberling} for centers on the Euler line as follows.

\begin{defn}[Shinagawa Coefficients]
    For a triangle \(\triangle ABC\), let \(X\) be a triangle center with barycentric coordinates \( (f(a,b,c):f(b,c,a):f(c,a,b))\)\footnote{Here $f(a,b,c)$ is a triangle center function, as defined at the beginning of \cite{Kimberling}.}. The Shinagawa coefficients of \(X\) is the pair of functions (\(\psi(a,b,c)\), \(\delta(a,b,c)\)) such that the following condition holds cyclically:
    \begin{equation} 
    f(a,b,c) = \psi(a,b,c) \cdot S^2 + \delta(a,b,c) \cdot S_BS_C,
    \end{equation}
    where \(S_A, S_B, S_C, S\) are the Conway symbols for \(\triangle ABC\). The side lengths are \(a,b,c\), \(S\) is twice the area of \(\triangle ABC\), \(S_A = \frac{1}{2} (b^2+c^2-a^2)\), and \(S_B, S_C\) are defined cyclically.    
\end{defn}

This definition provides a condition for a triangle center to be a $P_{\lambda}$ point: its Shinagawa coefficients, \(\psi(a,b,c)\) and \(\delta(a,b,c)\), must be constants. A list of such centers is available in \cite{TablesKimberling} under the name "S-Coefficients". 

Some notable centers with constant coefficients include: $X_2 = G$ (the centroid), $X_{3} = O$ (the circumcenter), $X_4 = H$ (the orthocenter), $X_5 = N$ (the nine-point center), \(X_{20} = L\) (de Longchamps point), $X_{30}$ (the Euler infinity point), and \(X_{140}\) (the midpoint of \(X_3\) and \(X_5\)).

We now aim to determine how to convert the Shinagawa coefficients into the \(\lambda\) values of a $P_{\lambda}$ point, first giving a Lemma stated by Kimberling \cite{Kimberling}. 

\begin{lem}\label{shinagawa}
    Let a point \(X\) have Shinagawa coefficients \((u, v)\), where \(u\) and \(v\) are real numbers, that is, \(\psi(a,b,c)\) and \(\delta(a,b,c)\) are constants. Then the vector between \(X\) and \(G=X_2\), where $O=X_3=(0,0)$, is given by
    \begin{equation}
        \overrightarrow{GX} = \frac{2v}{3u + v}\overrightarrow{OG}.
    \end{equation}
\end{lem}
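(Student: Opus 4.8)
The plan is to work directly in barycentric coordinates and translate the Shinagawa condition into a position vector relative to the circumcenter $O$. Since the Shinagawa coefficients are constants $(u,v)$, the barycentric coordinates of $X$ take the form
\begin{equation}
    \left( uS^2 + vS_BS_C : uS^2 + vS_CS_A : uS^2 + vS_AS_B \right),
\end{equation}
where I have applied the cyclic substitution to $f(a,b,c) = uS^2 + vS_BS_C$. The goal is to convert these into a Cartesian displacement. The cleanest route is to recall the known barycentric representations of the two reference centers: the centroid $G = (1:1:1)$ and the circumcenter $O = X_3$, whose barycentric coordinates are $(S_A : S_B : S_C)$ up to the usual normalization, or equivalently $(a^2 S_A : b^2 S_B : c^2 S_C)$ depending on convention. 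I would fix conventions early and state the normalized (actual) barycentric coordinates so that coefficients sum to $1$, since only normalized coordinates give honest affine combinations of the vertices.

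First I would write $x = \alpha a + \beta b + \gamma c$ with $\alpha + \beta + \gamma = 1$ for the position vector of $X$, obtained by normalizing the homogeneous triple above by its coordinate sum $\Sigma = 3uS^2 + v(S_BS_C + S_CS_A + S_AS_B)$. Next I would use the standard Conway identity $S_BS_C + S_CS_A + S_AS_B = S^2$ (which follows from $S_A S_B + S_B S_C + S_C S_A = S^2$, a consequence of Heron's formula in Conway form), so that $\Sigma = (3u + v)S^2$. This is the step that makes the denominator $3u + v$ appear and is where I expect the bookkeeping to be most delicate. With $O$ as the origin, I would then subtract the centroid: since $G = \frac{1}{3}(a+b+c)$ and $X = \frac{1}{\Sigma}\sum (uS^2 + vS_CS_A)\,b$-type terms, the displacement $\overrightarrow{GX} = x - g$ should collapse after the $S^2$ parts cancel against the centroid, leaving only the $v$-weighted $S_BS_C$ terms.

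The key computation is therefore
\begin{equation}
    x - g = \frac{1}{(3u+v)S^2}\sum_{\mathrm{cyc}} \left( uS^2 + vS_BS_C \right) a - \frac{1}{3}\sum_{\mathrm{cyc}} a,
\end{equation}
and the plan is to show the $u$-part of the first sum equals $\frac{u}{3u+v}(a+b+c)$, so that combining with $-\frac{1}{3}(a+b+c)$ produces a clean multiple of $a+b+c$, while the $v$-part contributes a multiple of the vector $\sum_{\mathrm{cyc}} S_BS_C\, a$. The final, essential input is the identity (with $O$ at the origin) that $\sum_{\mathrm{cyc}} S_BS_C\, a = S^2 (a+b+c) = 3S^2\,\overrightarrow{OG}$, which expresses the circumcenter-origin relation; equivalently it encodes that $X_3$ has the Conway barycentric form. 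Granting this, everything reduces to $\overrightarrow{GX} = \frac{v}{3u+v}\cdot 2\,\overrightarrow{OG}$ after the arithmetic, matching the claim.

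The main obstacle I anticipate is not conceptual but one of normalization and sign conventions: the Conway identities and the barycentric coordinates of $O$ must be quoted in exactly the convention under which $O$ is the origin, and one must verify the factor of $2$ in $\frac{2v}{3u+v}$ genuinely emerges rather than an artifact of a mismatched normalization between homogeneous and actual barycentrics. I would therefore carry the normalization factor $\Sigma$ explicitly through the whole computation rather than dropping it, and cross-check the final formula against a known constant-coefficient center — for instance the orthocenter $H = X_4$, where $(u,v) = (1,-1)$ gives $\overrightarrow{GX} = -\overrightarrow{OG}$, consistent with $\overrightarrow{GH} = -\overrightarrow{OG}\cdot$(the standard Euler-line ratio) — to confirm the constant is correct before declaring the proof complete.
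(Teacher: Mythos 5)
The paper does not actually prove this lemma: it is quoted from Kimberling's ETC and used as a black box, so your proposal supplies a derivation where the paper gives none. Your route is sound and the computation goes through as you outline it. Normalizing the triple $\left(uS^2+vS_BS_C : uS^2+vS_CS_A : uS^2+vS_AS_B\right)$ by $\Sigma=(3u+v)S^2$ via the Conway identity $S_AS_B+S_BS_C+S_CS_A=S^2$ is exactly the right move, and your key identity $\sum_{\mathrm{cyc}}S_BS_C\,a=S^2(a+b+c)$ (with $O$ at the origin) follows from $S_BS_C=S^2-a^2S_A$ (using $S_B+S_C=a^2$) together with the circumcenter relation $\sum_{\mathrm{cyc}}a^2S_A\,a=0$; it would strengthen the write-up to make that two-line derivation explicit rather than asserting it. The arithmetic then gives $x=\frac{u+v}{3u+v}(a+b+c)$ and $x-g=\frac{2v}{3(3u+v)}(a+b+c)=\frac{2v}{3u+v}\overrightarrow{OG}$, as claimed; note this also yields the paper's Proposition 3.4 for free.

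One genuine error, confined to your final cross-check: $(u,v)=(1,-1)$ is not the orthocenter but the circumcenter $X_3$, since $a^2S_A=S^2-S_BS_C$; your formula then gives $\overrightarrow{GX}=-\overrightarrow{OG}$, i.e.\ $X=O$, which is consistent but is not the check you intended. The orthocenter $X_4$ has Shinagawa coefficients $(0,1)$, for which the lemma gives $\overrightarrow{GH}=2\,\overrightarrow{OG}$ --- the classical Euler-line ratio you should be verifying. This does not affect the validity of the main argument, but as written the sanity check would mislead a reader.
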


\begin{prop}[Conversion of Shinagawa coefficients]
    Let a center \(X\) have constant Shinagawa coefficients \((u, v)\). Then the lambda coefficient $\lambda$ associated with \(X\) is described by the function \(\tau: \mathbb{R}^2 \setminus \{ (0,0)\} \rightarrow \mathbb{R} \cup \{ \infty\}\) such that\footnote{The case where $3u+v=0$ corresponds to $\lambda = \infty$, which is the Euler infinity point, $X_{30}$, with Shinagawa coefficients $(1, -3)$.} 
    \begin{equation}\label{shinagawafor}
        \tau(u, v) = \frac{u + v}{3u + v} = \lambda.
    \end{equation}
\end{prop}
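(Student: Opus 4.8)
The plan is to combine Lemma \ref{shinagawa} with the defining formula of a $P_{\lambda}$ point in the case $n=3$, reading everything in the frame where the circumcenter $O$ is the origin. A triangle center lives on a triangle $\sigma = ABC$, so I work with the position vectors $a,b,c$; then the $P_{\lambda}$ point satisfies $p_{\lambda} = \lambda(a+b+c) = \lambda \sum_j a_j$, while the centroid is $G = \frac{1}{3}(a+b+c)$. Because $O$ is the origin, $\overrightarrow{OG} = g = \frac{1}{3}(a+b+c)$, so that $a+b+c = 3\,\overrightarrow{OG}$. This identity is the bridge that lets me pass from the Shinagawa description, which is phrased relative to $O$ and $G$, to the $P_{\lambda}$ description, which is phrased as a scalar multiple of $\sum_j a_j$.

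First I would apply Lemma \ref{shinagawa} to the center $X$ with constant coefficients $(u,v)$, obtaining $\overrightarrow{GX} = \frac{2v}{3u+v}\,\overrightarrow{OG}$. Writing $x = g + \overrightarrow{GX}$ and using $g = \overrightarrow{OG}$, this becomes
\[
x = \left(1 + \frac{2v}{3u+v}\right)\overrightarrow{OG} = \frac{3(u+v)}{3u+v}\,\overrightarrow{OG}.
\]
Substituting $\overrightarrow{OG} = \frac{1}{3}(a+b+c)$ collapses the factor of $3$ and yields $x = \frac{u+v}{3u+v}(a+b+c) = \frac{u+v}{3u+v}\sum_j a_j$. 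Comparing this with the definition $p_{\lambda} = \lambda \sum_j a_j$ and matching the coefficient of the nonzero vector $\sum_j a_j$ (nonzero since the triangle is nondegenerate) forces $\lambda = \frac{u+v}{3u+v}$, which is precisely $\tau(u,v)$ in \eqref{shinagawafor}. I would then record the boundary behavior: the expression is well defined on $\mathbb{R}^2 \setminus \{(0,0)\}$ except along the line $3u+v = 0$, where $X$ recedes to infinity along the Euler line; setting $\tau(u,v) = \infty$ there is consistent with the Euler infinity point $X_{30}$ having coefficients $(1,-3)$.

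I do not expect a genuine obstacle here, since the analytic content is entirely contained in Lemma \ref{shinagawa}, quoted from \cite{Kimberling}, and the remaining work is bookkeeping. The one point that requires care is the choice of reference frame: both the Lemma and the $P_{\lambda}$ definition implicitly place the circumcenter at the origin, and the key identification $\overrightarrow{OG} = \frac{1}{3}\sum_j a_j$ depends on this normalization. Since the parameter $\lambda$ is read off as the ratio between $x$ and $\sum_j a_j$, this identification is scale-invariant, so the conversion $\tau(u,v) = \lambda$ holds regardless of the circumradius; stating the normalization explicitly makes the chain of substitutions rigorous.
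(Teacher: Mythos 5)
Your proposal is correct and follows essentially the same route as the paper: apply Lemma \ref{shinagawa}, identify $\overrightarrow{OG}=\frac13(a+b+c)$ with $O$ at the origin, and compare the resulting coefficient of $a+b+c$ with the definition $p_{\lambda}=\lambda\sum_j a_j$ (the paper merely equates two expressions for $\overrightarrow{GX}$ instead of solving for $x$ directly). One small inaccuracy: $\sum_j a_j$ need not be nonzero for a nondegenerate triangle (it vanishes for an equilateral triangle centered at $O$, where all $P_{\lambda}$ points coincide and $\lambda$ is not determined), but the paper's own proof silently divides by the same quantity, so this does not distinguish your argument from theirs.
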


\begin{proof}
    We let $x = \lambda (a+b+c)$. By Lemma \ref{shinagawa} we get \begin{equation}\label{part1}
        \begin{split}
        \overrightarrow{GX} = \frac{2v}{3u+v} \overrightarrow{OG}  = \frac{\frac23v }{3u+v} (a+b+c).
    \end{split} 
    \end{equation} 

Furthermore, \begin{equation}\label{part2}
    \begin{split}
    \overrightarrow{GX} = x-g =  \lambda(a+b+c) - \frac13 (a+b+c) = \left(\lambda - \frac{1}{3}\right) \left(a+b+c\right).
\end{split} 
\end{equation}

Hence, by \eqref{part1} and \eqref{part2}, 
\begin{equation}
    \begin{split}
        \left(\lambda - \frac{1}{3}\right) \left(a+b+c\right) &= \frac{2v }{3u+v}\cdot \frac13 (a+b+c) \\
    \implies \tau(u,v) &= \lambda = \frac{\frac{2}{3}v }{3u+v}  + \frac{1}{3} = \frac{u+v}{3u+v},
\end{split}   
\end{equation} which concludes our proof. 
\end{proof}

According to \cite{Kimberling,TablesKimberling}, there are 721 centers $X_k$ (for $k \leq 62171$) with constant Shinagawa coefficients (as of December $30$th, 2024). We note a disconnect between the study of these centers as individual points and the understanding of their role within the broader framework presented in this paper. For instance, some specific cases of points one and three of Theorem \ref{themG} for $d=2$ and $n=4$ have been proven for particular triangle centers by Rabinowitz and Suppa \cite{Rabinowitz2025}, but our work shows these are not isolated results but consequences of a more general principle. 

\noindent \textbf{Acknowledgments.} The author would like to express his sincere gratitude to Michał Kotowski and  Adam Dzedzej for their invaluable support. Further, thank you is extended to Clark Kimberling, Radoslaw Żak and Ewelina Szajdziuk.

\end{document}